\def\Z{{\mathbb Z}}
\def\Q{{\mathbb Q}}
\def\mo{\mathcal{M}_F}
\def\orho{\overline{\rho}}
\def\cs{\mathbin{\#}}
\newcommand{\fig}[2] { \includegraphics[scale=#1]{#2} }
\newtheorem{theorem}{Theorem}
\newtheorem{corollary}[theorem]{Corollary}
\theoremstyle{definition}
\newtheorem{example}[theorem]{Example}
\def\co{\colon\thinspace}
\begin{document}
\title{Chiral smoothings of knots}

\author{Charles Livingston}
\thanks{This work was supported by a grant from the National Science Foundation, NSF-DMS-1505586.}
\address{Charles Livingston: Department of Mathematics, Indiana University, Bloomington, IN 47405 }

\email{livingst@indiana.edu}


\begin{abstract} Can smoothing a single crossing in a diagram for a knot convert it into a diagram of the knot's mirror image?  Zekovi\'c found such a smoothing for the torus knot $T(2,5)$, and Moore-Vasquez proved that such smoothings do not exist for other torus knots $T(2,m)$ with $m$ square free. The existence of such a smoothing implies that $K \cs K$ bounds a Mobius band in $B^4$.  Casson-Gordon theory then provides new obstructions to the existence of such chiral smoothings, in particular removing the constraint that $m$ be square free in the Moore-Vasquez theorem, with the exception of $m=9$, which remains an open case.  Heegaard Floer theory provides further obstructions, which do not apply to the case of $T(2,m)$  but do give constraints in the case of $T(2k,m)$ for $k >1$. Similar methods can be applied to provide lower bounds on the required  number of smoothings needed to convert a knot $K$ into a knot $J$.

\end{abstract}

\maketitle


\section{Introduction}\label{sec:intro}

Given a knot $K\subset S^3$, does it have a diagram for which smoothing a single crossing results in a diagram of its mirror image?   If so, the knot is said to {\it support a chiral smoothing}.  The existence of such a smoothing is easily seen to be equivalent to the existence of   a {\it chiral band move}~\cite{MR3307286,2016arXiv160201542I} or a chiral $H(2)$ move~\cite{MR1075165}, and such a move is called {\it chirally cosmetic}.   The problem naturally generalizes, asking if knots $K$ and $J$ are related by a single smoothing.  Literature on this topic, including its relationship to problems in the study of DNA, 
includes~\cite{MR3307286,MR1075165,2017arXiv170700152I,2016arXiv160201542I,MR2755489,MR2812265,MR3548474,MR2573402}.  Perhaps the starting point of this line of research was in the work of Lickorish~\cite{MR859958}, asking whether given knots could by unknotted with a single band move.

A  basic example, first  discovered by  Zekovi\'c~\cite{MR3331990}, is that  the  torus knot $T(2,5)$ supports a chiral smoothing.   Figure~\ref{fig:2-5} illustrates such a smoothing.  In the reverse direction, Moore and Vazquez~\cite{2018arXiv180602440M} showed that $T(2,5)$ is unique among positive torus knots $T(2,m)$, with $m$  square-free, for which such a move exists. 

\begin{figure}[h]
\fig{.10}{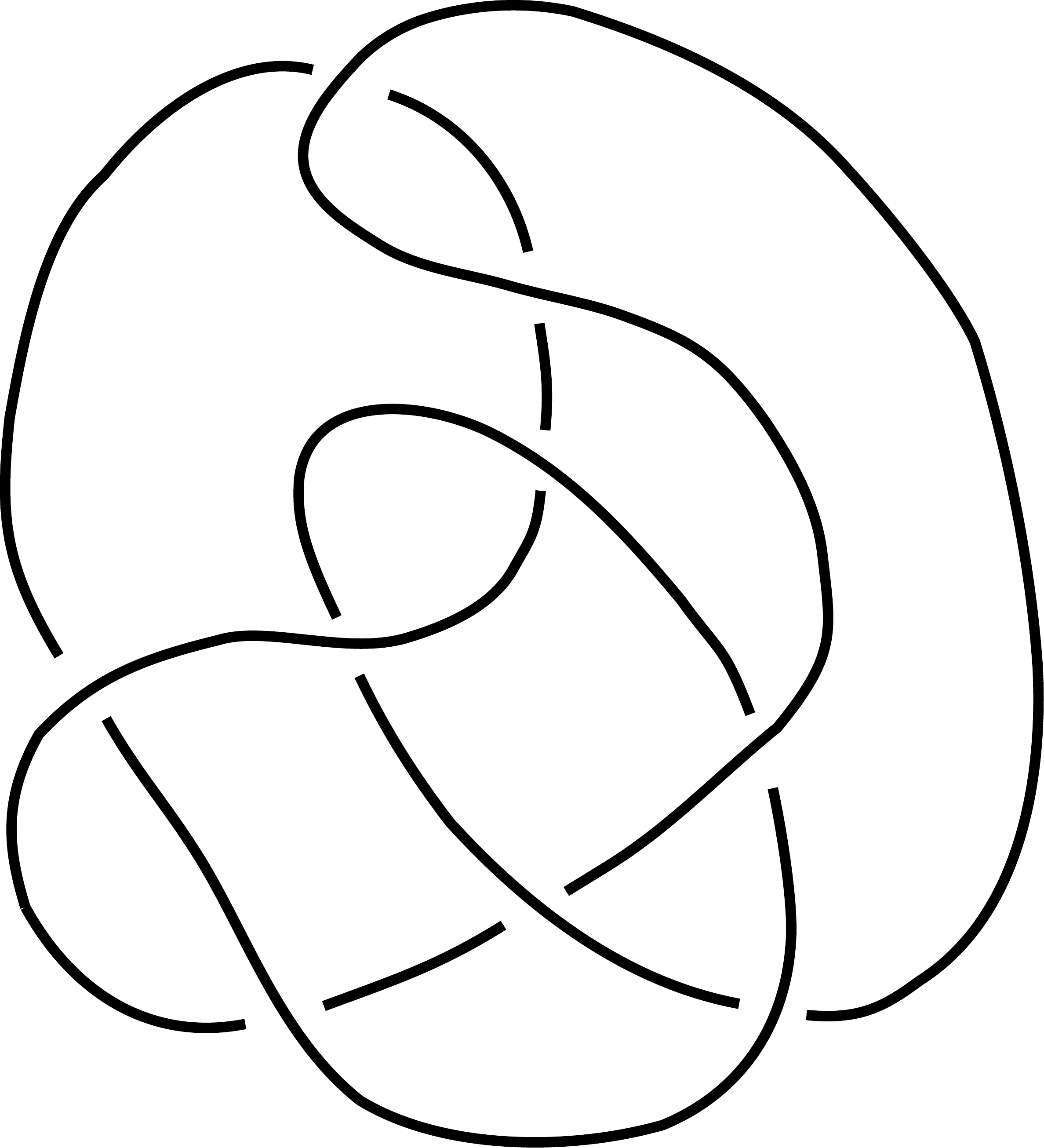}\hskip1in \fig{.10}{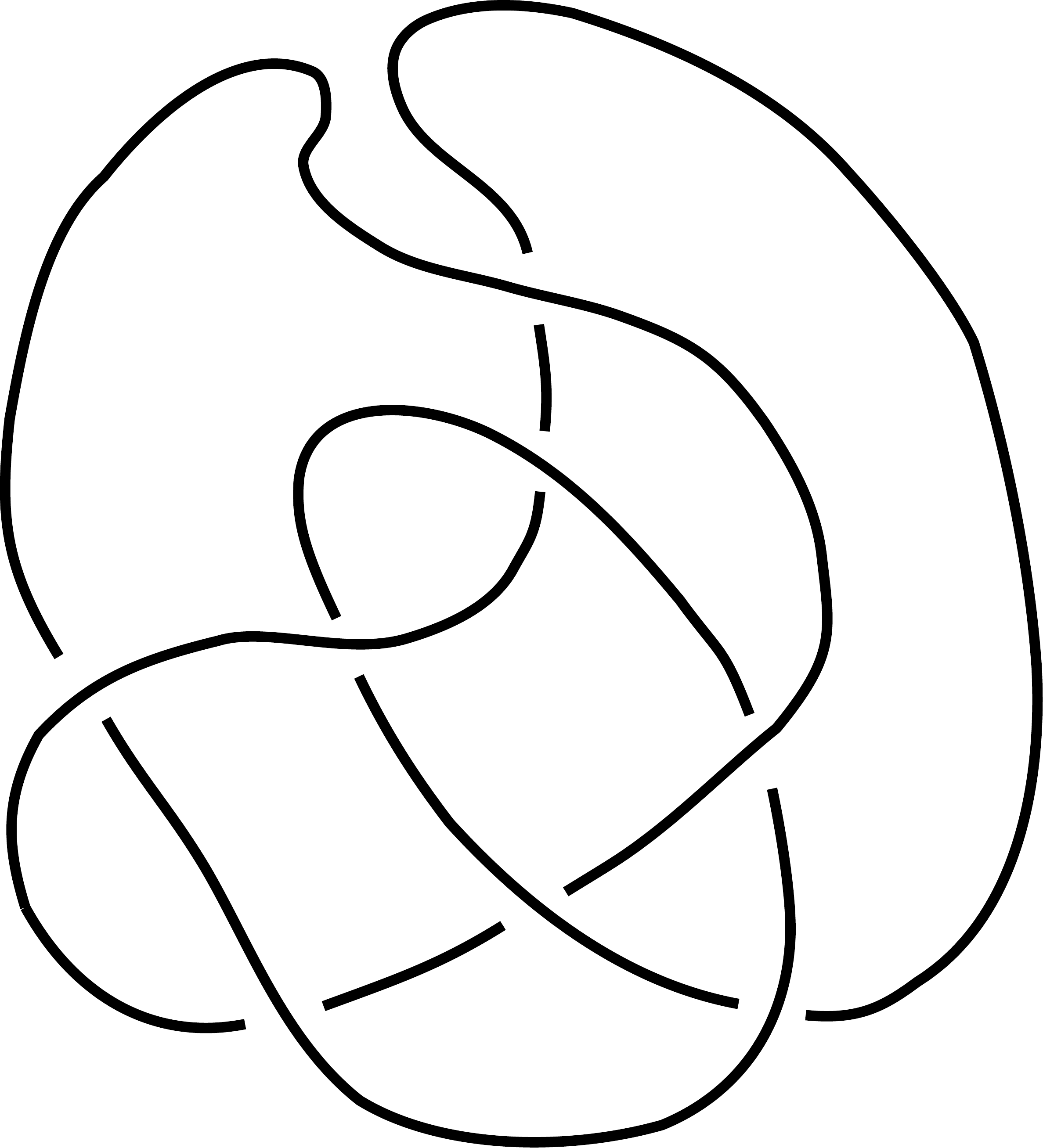}\\
\caption{$T(2,-5)$ smoothed to $T(2,5)$}
\label{fig:2-5}
\end{figure}

If $K$ supports a chiral smoothing, then  a single band move converts  $K\cs K$ into $K\cs -K$, where $-K$ denotes the mirror image of $K$ with string orientation reversed.   The knot $-K$ is the concordance inverse of $K$, meaning that $K \cs -K$ is a slice knot, but in fact it  is  a ribbon knot.  Thus, if $K$ supports a chiral smoothing, then $K\cs K$  bounds a ribbon  Mobius band in $B^4$.   It follows that Casson-Gordon theory~\cite{MR900252,MR656619} can be applied.  We will follow this approach by using the application of Casson-Gordon theory to nonorientable surfaces in $B^4$ that was developed in~\cite{MR2855790}.  One corollary is that the condition that $m$ be square-free can be removed from the Moore-Vasquez result, with the exception of $T(2,9)$, which remains an unknown case.

We will concentrate on the case of $T(2,m)$, since these knots have two-fold branched covers that are lens spaces, $L(2m+1, 1)$;   Casson-Gordon invariants for these spaces can be computed in closed form.  
This work can be extended in a number of ways:
\begin{itemize}
\item Casson-Gordon invariants of two-bridge knots, $B(s,q)$, can be easily computed, lending themselves to produce further examples.  
\item Pairs of knots $(K,J)$ rather than $(K,-K)$ can be considered.  This is related to the $H(2)$ distance, studied, for instance, by Kanenobu in~\cite{MR2812265}.
\item The obstruction we focus on is based on the maximum of the absolute values of a set of sums, $ \{|a_i + a_j|\}$, where the $(a_i,a_j)$ are specified pairs taken from a set $\{a_i\}$.  We use a weak bound on these sums, based on constraints on the set,  given by $ |\min\{a_i\} + \max\{a_i \}|$; this  can be considerably improved.
\end{itemize}
We will provide a few examples of such extensions;   a more thorough study will be left to the future.

In the final section, we briefly discuss the application of Heegaard Floer techniques and results of Ozsv\'ath-Stipsicz-Szab\'o~\cite{MR3694597}  to the general problem.

\smallskip

\noindent{\it Acknowledgements} This work has benefitted from discussions with Jim Hoste, Pat Gilmer and   Allison Moore; their help and insights are much appreciated.  In addition, Pat carried out the computations of Casson-Gordon invariants described in~Section~\ref{sec:hfcg}.

\section{Linking forms and metabolizers}
Let $M(J)$ denote the two-fold branched cover of $S^3$ branched over $J$.  If $J$ bounds an embedded surface $F\subset B^4$, let $W(F)$ denote the two-fold branched cover of $B^4$ branched over $F$.  In~\cite[Theorem 2]{MR656619}  and ~\cite{MR1690998}  a result is proved which immediately implies  the following.

\begin{theorem}\label{thm:linking}  If $J$ bounds a Mobius band $F \subset B^4$, then the linking form on $M(J)$     splits as $$   (H_1(M(J)), \text{lk}) \cong (G_1, \beta_1) \oplus (G_2, \beta_2)$$ where $G_1$ is cyclic (possibly trivial) and $\beta_2$ vanishes on  
$$\mo =  \text{im}\Big (\text{torsion}\big (H_2(W(F),M(J) ) \to H_1(M(J) ) \big) \Big) \subset G_2.$$ The order of $\mo$ satisfies  $|\mo |^2 = |G_2|$.  \end{theorem}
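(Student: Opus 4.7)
The overall strategy mirrors Gilmer's argument~\cite{MR656619} for orientable surfaces, adapted via~\cite{MR1690998} to accommodate the non-orientability of the Mobius band $F$. The essential tools are the long exact sequence of the pair $(W,M)$, where $W = W(F)$ and $M = M(J)$; Poincar\'e-Lefschetz duality for $W$ (which is orientable, since it is a branched double cover of the orientable $B^4$ along a codimension-two submanifold); and the standard identification of the linking form on $M$ with a boundary of the intersection pairing on $W$.

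First I would compute the integral homology of $W$ from a handle decomposition in which $F$ appears as a $0$-handle with a half-twisted $1$-handle attached. Because $\beta_1(F) = 1$, a direct calculation produces cyclic $H_1(W;\Z)$, whose order is controlled by the normal-bundle data of $F$ in $B^4$. Running the long exact sequence
$$H_2(W) \to H_2(W,M) \xrightarrow{\partial} H_1(M) \xrightarrow{i_*} H_1(W) \to H_1(W,M) \to H_0(M),$$
we see that $\mathrm{im}(i_*)$ is a finite cyclic quotient of the finite group $H_1(M)$. Choose a splitting $H_1(M) = G_1 \oplus G_2$ with $G_1 \cong \mathrm{im}(i_*)$ and $G_2 = \ker(i_*) = \mathrm{im}(\partial)$, and define $\mo$ to be the image under $\partial$ of $\mathrm{torsion}(H_2(W,M))$.

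The vanishing of $\beta_2$ on $\mo$ is then the standard boundary-of-intersection-pairing argument: given $x,y \in \mo$, lift to torsion classes $\tilde{x},\tilde{y} \in H_2(W,M)$; some multiple of $\tilde{x}$ bounds a relative $3$-chain in $W$, and computing $\mathrm{lk}(x,y)$ via intersections with $\tilde{y}$ returns an integer, i.e.\ $\beta_2(x,y) = 0$ in $\Q/\Z$. The order identity $|\mo|^2 = |G_2|$ then follows from an exact-sequence count using Poincar\'e-Lefschetz duality, which identifies $\mathrm{torsion}(H_2(W,M))$ with $\mathrm{torsion}(H^2(W))$ and thereby relates it to $\mathrm{torsion}(H_1(W))$; tracking the free ranks and torsion orders around the long exact sequence recovers the metabolizer-type count familiar from the orientable case.

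The main obstacle is the bookkeeping of free-versus-torsion summands in $H_*(W)$ that are non-trivial precisely because $F$ is non-orientable; in the orientable Seifert surface setting $H_1(W;\Q) = 0$ and no cyclic summand $G_1$ needs to be split off, so for the Mobius band one must carefully isolate the cyclic contribution of $H_1(W)$ before the classical metabolizer statement can be recovered on the complementary summand $G_2$.
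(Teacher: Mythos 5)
The paper does not actually prove this statement---it is quoted as an immediate consequence of \cite[Theorem 2]{MR656619} and \cite{MR1690998}---so your sketch has to stand on its own, and as written it contains a genuine error in the identification of the two summands. You set $G_1\cong \mathrm{im}(i_*)$ and $G_2=\ker(i_*)$, where $i_*\co H_1(M)\to H_1(W)$. Test this on $J=T(2,3)$ bounding the pushed-in checkerboard Mobius band $F$ from its standard diagram: there $W(F)$ is simply connected with $H_2(W(F))\cong\Z$ carrying the Gordon--Litherland form $(\pm 3)$, so $H_1(W)=0$, $i_*=0$, and your recipe gives $G_2=\ker(i_*)=\Z_3$. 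But $H_2(W,M)\cong H^2(W)\cong\Z$ is torsion-free, so $\mathcal{M}_F=0$, and the theorem's conclusion $|\mathcal{M}_F|^2=|G_2|$ forces $G_2=0$ and $G_1=\Z_3$ (consistent with the fact that the linking form on $\Z_3$ is anisotropic, hence has no metabolic summand). So $\ker(i_*)$ is not $G_2$. The cyclic summand $G_1$ does not come from $\mathrm{im}(i_*)$ at all; it comes from the image under $\partial$ of the rank-one free part of $H_2(W,M)\cong H^2(W)$, i.e.\ ultimately from $\beta_2(W)=\beta_1(F)=1$ and the self-intersection of the generator of $H_2(W)$ (the normal Euler data of $F$). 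This is precisely the ``summand presented by a $\beta_1(F)\times\beta_1(F)$ matrix'' in Gilmer's and Murakami--Yasuhara's formulation, and it is the one place where the Mobius-band hypothesis enters; your sketch never uses $\beta_2(W)=1$.

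Two further gaps compound this. First, the theorem asserts an orthogonal splitting of the linking \emph{form}, not merely of the group; a short exact sequence of finite abelian groups such as $0\to\Z_2\to\Z_4\to\Z_2\to 0$ need not split, and even when a group-theoretic complement exists one must verify it can be chosen orthogonal to $G_1$ with respect to $\mathrm{lk}$. You ``choose a splitting'' without addressing either point. Second, the order count $|\mathcal{M}_F|^2=|G_2|$ is asserted to follow from ``tracking ranks and torsion orders,'' but with the wrong $G_2$ the count is false (again $1\neq 3$ in the example above), and even with the correct $G_2$ this is the half-lives-half-dies-type argument that constitutes most of the work in \cite{MR656619,MR1690998}. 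The parts of your sketch that are sound are the orientability of $W$, the finiteness of $H_1(W)$ over $\Q$, and the standard argument that classes in $\partial(\mathrm{torsion}\,H_2(W,M))$ have vanishing linking form; the architecture connecting these to the stated splitting needs to be rebuilt around $H^2(W)\cong\Z\oplus\mathrm{torsion}(H_1(W))$ rather than around $\ker(i_*)$.
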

By definition, the form $(G_2, \beta_2)$ is called {\it metabolic} because it vanishes on a subgroup of order $\sqrt{|G_2|}$.

We now consider the case in which $J= K \cs K$ and  $H_1(M(K))$ is cyclic.  This includes two-bridge knots $K = B(s,q)$ for which $M(K)$ is the lens space $L(s,q)$.  Choose a prime divisor $p$ of $|H_1(M(K)| = s$ so that $H_1(M(K)) \cong \Z_{p^a} \oplus \Z_b$ for some $a >0$ and $\gcd(p,b) = 1$.  

Assume that $J = K \cs K$ bounds a ribbon Mobius band and restrict  the linking form to the {\it $p$--torsion subgroup} $\mathcal{H}_p \subset H_1(M(J))$ defined as the set of elements $x$ such that $p^kx = 0$ for some $k>0$.  Theorem~\ref{thm:linking} quickly implies  that there are two  possible splittings for the linking form on $\mathcal{H}_p$.

\begin{corollary}\label{thm:meta} Suppose  $J = K\cs K$ and $H_1(M(K))  \cong \Z_{p^a} \oplus \Z_b$, where $p$ does not divide $b$.  If $J$ bounds an embedded Mobius band in $B^4$, then the  linking form for $H_1(M(J))$ splits as one of two possibilities  given by Theorem~\ref{thm:linking}, with $\beta_2$ metabolic.  \begin{itemize}
\item $(\mathcal{H}_p, lk) \cong  (\Z_{p^a}, \beta_1) \oplus (\Z_{p^a}, \beta_2)$, or
\item $(\mathcal{H}_p, lk) \cong  (0, \beta_1) \oplus (\Z_{p^a}\oplus \Z_{p^a} , \beta_2).$
\end{itemize}

\end{corollary}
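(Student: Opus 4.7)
The plan is to apply Theorem~\ref{thm:linking} after observing that $M(J)$ is a connected sum, then restrict the resulting splitting to the $p$-primary part of $H_1(M(J))$, reducing the statement to a classification of cyclic direct summands of $\Z_{p^a}\oplus\Z_{p^a}$.

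First I would identify $H_1(M(J))$. Since $J=K\cs K$, the branched double cover is $M(J)\cong M(K)\#M(K)$, so
\[
H_1(M(J))\cong H_1(M(K))\oplus H_1(M(K))\cong(\Z_{p^a}\oplus\Z_b)^2,
\]
and its $p$-primary subgroup is $\mathcal{H}_p\cong\Z_{p^a}\oplus\Z_{p^a}$. The linking form on any finite abelian group splits orthogonally across its primary decomposition (elements of coprime orders pair trivially into $\Q/\Z$), so the splitting $(H_1(M(J)),\text{lk})\cong(G_1,\beta_1)\oplus(G_2,\beta_2)$ supplied by Theorem~\ref{thm:linking} descends to a splitting $(\mathcal{H}_p,\text{lk})\cong((G_1)_p,\beta_1)\oplus((G_2)_p,\beta_2)$, where $(\cdot)_p$ denotes $p$-primary part.

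Next I would verify that $\beta_2$ remains metabolic after restriction: since $|\mo|^2=|G_2|$, the order $|G_2|$ is a perfect square, so its $p$-part is also a square, and the subgroup $(\mo)_p\subseteq(G_2)_p$ has order $\sqrt{|(G_2)_p|}$ with $\beta_2$ vanishing on it. Then comes the main algebraic step: $(G_1)_p$ is a cyclic $p$-group that is a direct summand of $\Z_{p^a}\oplus\Z_{p^a}$, whose elementary divisor multiset is $\{p^a,p^a\}$. Because the elementary divisors of a direct sum are the union of those of the summands, any cyclic summand of order $p^c$ with $0<c<a$ would contribute an elementary divisor $p^c\notin\{p^a,p^a\}$, a contradiction. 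Hence $(G_1)_p$ is either $0$ or $\Z_{p^a}$, forcing $(G_2)_p$ to be $\Z_{p^a}\oplus\Z_{p^a}$ or $\Z_{p^a}$ respectively; these are precisely the two listed cases.

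The main technical care will be in the bookkeeping of the passage to $p$-primary parts---checking that both the orthogonal splitting and the metabolizer survive restriction---while the final dichotomy is a routine consequence of uniqueness of the primary decomposition for finite abelian groups.
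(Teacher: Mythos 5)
Your proposal is correct and follows the same route the paper intends: the paper offers no written proof, asserting only that Theorem~\ref{thm:linking} ``quickly implies'' the corollary, and your argument (identify $M(J)$ as $M(K)\cs M(K)$, restrict the orthogonal splitting to the $p$-primary part, and use uniqueness of elementary divisors to force the cyclic summand $(G_1)_p$ to be $0$ or $\Z_{p^a}$) is exactly the standard filling-in of that claim. The only point worth tightening is your assertion that $(\mathcal{M}_F)_p$ has order $\sqrt{|(G_2)_p|}$: this follows not merely from $|G_2|$ being a square but from nonsingularity of $\beta_2$, which forces $|(\mathcal{M}_F)_q|^2 \le |(G_2)_q|$ for every prime $q$, so that the global equality $|\mathcal{M}_F|^2 = |G_2|$ forces equality prime by prime.
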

This  has the following consequence.

\begin{corollary}\label{cor:primes}  Suppose that $H_1(M(K)) \cong \Z_s$    and $K \cs K$ bounds a ribbon Mobius band in $B^4$.  If $p$ is a prime satisfying $p \equiv 3 \mod 4$, then  the exponent of $p$ in $s$ is even.   If  $p$ is a prime having odd exponent in $s$,  then $p \equiv 1\mod 4$ and   $(\mathcal{H}_p, lk) \cong  (0, \beta_1) \oplus (\Z_{p^a}\oplus \Z_{p^a} , \beta_2).$   \end{corollary}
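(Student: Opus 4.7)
My plan is to apply Corollary~\ref{thm:meta} and then reduce the question to an elementary statement about sums of two squares modulo $p$. The first conclusion of the corollary is the contrapositive of the parity part of the second, so it suffices to prove the second.

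Since $H_1(M(K)) \cong \Z_s$, the $p$-primary subgroup $\mathcal{H}_p$ of $H_1(M(K \cs K)) \cong \Z_s \oplus \Z_s$ is $\Z_{p^a} \oplus \Z_{p^a}$, and the linking form restricts to $\lambda_p \oplus \lambda_p$, where $\lambda_p(1,1) = c/p^a$ with $\gcd(c, p) = 1$. Case (i) of Corollary~\ref{thm:meta} would require a metabolizer of order $p^{a/2}$ inside $\Z_{p^a}$, which is impossible when $a$ is odd. So for $a$ odd only case (ii) survives, which already gives the splitting asserted by the corollary; the remaining task is to show $p \equiv 1 \pmod 4$.

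Case (ii) supplies an isotropic subgroup $M \subset \Z_{p^a} \oplus \Z_{p^a}$ of order $p^a$. I would decompose $M \cong \Z_{p^{a_1}} \oplus \Z_{p^{a_2}}$ with $a_1 \geq a_2 \geq 0$ and $a_1 + a_2 = a$. Since $a$ is odd we have $a_1 > a/2$, hence $2a_1 - a \geq 1$. A generator of $M$ of maximal order $p^{a_1}$ can be written as $p^{a - a_1}(u,v)$ with $(u,v) \notin p\Z_{p^a} \oplus p\Z_{p^a}$, so at least one of $u,v$ is a unit modulo $p$. The isotropy relation for this generator simplifies to $c(u^2 + v^2) \equiv 0 \pmod{p^{2a_1 - a}}$, and in particular $u^2 + v^2 \equiv 0 \pmod p$. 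Combined with the unit assumption, this forces $-1$ to be a quadratic residue mod $p$, so $p \equiv 1 \pmod 4$.

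The main obstacle I anticipate is handling a non-cyclic metabolizer $M$ uniformly; the cyclic case (where a generator of order $p^a$ immediately gives a fresh pair $(u,v)$ with $u^2 + v^2 \equiv 0 \pmod{p^a}$) is essentially trivial. The fix is to work with a generator of maximal order rather than an arbitrary element, and the parity of $a$ is used crucially to guarantee $2a_1 - a \geq 1$, which is what forces the isotropy relation to descend modulo $p$ in the non-cyclic case as well.
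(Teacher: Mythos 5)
Your proposal is correct and follows essentially the same route as the paper: rule out the first splitting of Corollary~\ref{thm:meta} by the order count when $a$ is odd, then show that an order-$p^a$ isotropic subgroup of the direct double $(\Z_{p^a},\beta_3)\oplus(\Z_{p^a},\beta_3)$ forces $-1$ to be a quadratic residue mod $p$. The only difference is one of detail: the paper asserts this last number-theoretic fact as following ``quickly'' from the residue criterion for $-1$ (which it misstates, writing $p\equiv 3\pmod 4$ where $p\equiv 1\pmod 4$ is meant), whereas you verify it explicitly, correctly handling the non-cyclic metabolizer case via the inequality $2a_1-a\ge 1$.
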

\begin{proof}  If $a$ is odd, then a nonsingular form   on  $ \Z_{p^a}$ is not metabolic for any prime $p$.  If $p = 3 \mod 4$ and $a$ is odd, the form  $(\Z_{p^a}\oplus \Z_{p^a} , \beta_2)$    is a  direct sum  $(\Z_{p^a}, \beta_3) \oplus (\Z_{p^a},\beta_3)$ for some nonsingular form $\beta_3$.  Such a form cannot be  metabolic.  (The proof of this number theoretic  fact follows quickly from the theorem   that $-1$ is   a quadratic residue  module $p$ if and only if $p = 2$ or $ p  = 3\mod 4$.)
\end{proof}
\noindent As a quick application that we use later,   we have:
\begin{corollary}\label{cor:2-3}
If $H_1(M(K)) \cong \Z_s$ with  $s  \le 100$ and $K$ has a chiral smoothing, then 
$$ s \in \{5,9,13,17,25,29,37,41,45,49, 53, 61, 65, 73, 81, 85, 89, 97\}.$$
\end{corollary}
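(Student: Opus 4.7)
The plan is to apply Corollary~\ref{cor:primes} to reduce the question to an arithmetic enumeration. First, recall that for any knot $K$ the integer $s = |\det(K)| = |H_1(M(K))|$ is odd, so only odd values $s \le 100$ are candidates. By Corollary~\ref{cor:primes}, a chiral smoothing forces every prime $p \equiv 3 \mod 4$ dividing $s$ to appear with even exponent. Equivalently, the admissible $s$ are exactly those of the form $s = n m^2$, where every prime factor of $n$ is $\equiv 1 \mod 4$ and every prime factor of $m$ is $\equiv 3 \mod 4$ (with $n,m$ odd positive integers, and $n = 1$ or $m = 1$ permitted).

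The main step is the enumeration, organized by the factor $m$. The primes $\equiv 3 \mod 4$ whose square is at most $100$ are $3$ and $7$, so the constraint $m^2 \le 100$ restricts $m$ to $\{1,3,7,9\}$, giving $m^2 \in \{1,9,49,81\}$. For each such $m^2$, I would list all $n \le 100/m^2$ whose prime factors all lie in $\{5,13,17,29,37,41,53,61,73,89,97\}$, the primes $\equiv 1 \mod 4$ up to $97$. The case $m^2 = 1$ gives $n \in \{1,5,13,17,25,29,37,41,53,61,65,73,85,89,97\}$; the case $m^2 = 9$ forces $n \le 11$ and so $n \in \{1,5\}$, contributing $s \in \{9,45\}$; the cases $m^2 = 49$ and $m^2 = 81$ each force $n = 1$, contributing $s = 49$ and $s = 81$ respectively.

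Taking the union and dropping the vacuous case $s = 1$ (on which Corollary~\ref{cor:primes} imposes no constraint) yields exactly the eighteen values listed. There is no substantive obstacle here: the whole proof is a clean application of Corollary~\ref{cor:primes} followed by a small finite check, and the structured parametrization $s = n m^2$ makes verifying completeness of the enumeration routine.
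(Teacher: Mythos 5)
Your proof is correct and is exactly the argument the paper intends: the corollary is presented as an immediate application of Corollary~\ref{cor:primes} (via the fact that a chiral smoothing makes $K\cs K$ bound a ribbon Mobius band), and your parametrization $s=nm^2$ with the finite check reproduces the paper's list exactly. The only loose end is the exclusion of $s=1$, which Corollary~\ref{cor:primes} does not actually rule out; but that is inherited from the paper's own statement rather than being a defect of your argument.
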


\section{Casson-Gordon invariants and nonorientable surfaces}

Let $M$ be closed three-manifold with $H_1(M, \Q) = 0$, and let 
$\rho \co H_1(M) \to \Z_m \subset \Q/\Z$.   Casson and Gordon defined an invariant $\sigma(M,\rho) \in \Q$.  This is additive over   connected sums and $\sigma(M, 0 ) = 0$. With regards to our work here, its key property is the following, proved in~\cite{MR2855790}.

\begin{theorem}\label{thm:cg}
Suppose that a closed three-manifold  $M = \partial W$, where $W$ is compact with $H_1(W, \Q) = 0$, and the inclusion $\pi_1(M) \to \pi_1(W)$ is surjective.  For each $\rho\co H_1(M) \to \Z_p$ that extends to $H_1(W)$, 
$$ |\sigma(M,\rho) | \le 2\beta_2(W) + 1 + \frac{1}{p-1}\beta_1(\widetilde{M}),$$ where $\widetilde{M}$ is the $p$--fold cover of $M$ associated to $\rho$.
\end{theorem}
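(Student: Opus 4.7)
The plan is to pass to the cyclic $p$-fold cover $\widetilde{W} \to W$ determined by the extension of $\rho$, and bound $\sigma(M,\rho)$ as a twisted signature defect on $W$.  Let $\omega = e^{2\pi i/p}$, let $\C_\omega$ denote the flat complex line bundle on $W$ coming from the composition of the extension with $\omega$, and write $b_i^\omega(X) := \dim_\C H_i(X;\C_\omega)$.  The case $\rho = 0$ is trivial ($\sigma(M,0) = 0$), so assume $\rho$ is nontrivial; then the extension $\phi\co H_1(W) \to \Z_p$ is nontrivial as well, and the surjectivity hypothesis ensures that $\widetilde{W}$ and $\widetilde{M}$ are connected with $\pi_1(\widetilde{M}) \to \pi_1(\widetilde{W})$ still surjective.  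Since $M = \partial W$ and $\rho$ extends directly, no multiple of $M$ is required and the Casson-Gordon invariant equals the signature defect
\[
\sigma(M,\rho) = \sigma_\omega(W) - \sigma(W),
\]
where $\sigma_\omega(W)$ is the signature of the Hermitian intersection form on $H_2(W;\C_\omega)$.  The triangle inequality and the trivial bound on each signature then give
\[
|\sigma(M,\rho)| \le b_2^\omega(W) + \beta_2(W),
\]
so the problem reduces to showing $b_2^\omega(W) \le \beta_2(W) + 1 + \tfrac{1}{p-1}\beta_1(\widetilde{M})$.

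Next I would compute $b_2^\omega(W)$ from a twisted Euler characteristic argument.  Because $H_1(W;\Q) = 0$ and $W$ is a compact connected 4-manifold with connected boundary $M$, Poincar\'e-Lefschetz duality and the long exact sequence of $(W,M)$ give $\beta_3(W) = \beta_4(W) = 0$, hence $\chi(W) = 1 + \beta_2(W)$.  The twisted Euler characteristic equals the untwisted one (either cell-by-cell, or from $\chi(\widetilde{W}) = p\,\chi(W)$ combined with Galois symmetry across the $p-1$ nontrivial eigenspaces); the nontriviality of $\phi$ on $W$ forces $b_0^\omega(W) = 0$, and $W$ having nonempty boundary forces $b_4^\omega(W) = 0$.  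Assembling,
\[
b_2^\omega(W) = 1 + \beta_2(W) + b_1^\omega(W) + b_3^\omega(W).
\]

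The finishing steps are to show $b_3^\omega(W) = 0$ and bound $b_1^\omega(W)$.  By Poincar\'e-Lefschetz duality with twisted coefficients, $b_3^\omega(W) = \dim_\C H^1(W,M;\C_\omega)$.  The surjectivity $\pi_1(\widetilde{M}) \to \pi_1(\widetilde{W})$ yields, after abelianizing and tensoring with $\C$, a surjection $H_1(\widetilde{M};\C) \to H_1(\widetilde{W};\C)$; dualizing and decomposing along $\Z_p$-eigenspaces of the deck action shows that the restriction $H^1(W;\C_\omega) \to H^1(M;\C_\omega)$ is injective, and the long exact sequence of the pair then forces $H^1(W,M;\C_\omega) = 0$.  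For $b_1^\omega(W)$, Galois symmetry of the nontrivial eigenspaces together with $\beta_1(W) = 0$ gives $\beta_1(\widetilde{W}) = (p-1)\,b_1^\omega(W)$, and the same surjection gives $\beta_1(\widetilde{W}) \le \beta_1(\widetilde{M})$, whence $b_1^\omega(W) \le \tfrac{1}{p-1}\beta_1(\widetilde{M})$.  Substituting back yields the desired inequality.

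The main obstacle is the vanishing $b_3^\omega(W) = 0$: without the hypothesis that $\pi_1(M) \to \pi_1(W)$ is surjective one only obtains $b_3^\omega(W) \le b_1^\omega(W)$, which would double the coefficient in front of $\beta_1(\widetilde{M})$.  The primality of $p$ is also used here, so that all $p-1$ nontrivial characters are Galois conjugate and the eigenspace contributions can be pooled into a single $b_1^\omega(W)$; for composite $p$ one would need to keep track of the contributions from each irreducible factor separately.
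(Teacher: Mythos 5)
Your proposal is correct, but note that the paper itself contains no proof of this theorem: it is quoted verbatim from Gilmer--Livingston \cite{MR2855790}, where the argument is essentially the one you give --- identify $\sigma(M,\rho)$ with the signature defect $\sigma_\omega(W)-\sigma(W)$, bound it by $b_2^\omega(W)+\beta_2(W)$, compute $b_2^\omega(W)$ from the twisted Euler characteristic, kill $b_3^\omega(W)$ using the $\pi_1$-surjectivity, and control $b_1^\omega(W)$ by $\tfrac{1}{p-1}\beta_1(\widetilde M)$ via the eigenspace decomposition and Galois symmetry. The only step you pass over lightly is that the long exact sequence argument for $H^1(W,M;\C_\omega)=0$ also needs $H^0(M;\C_\omega)=0$, which holds because $M$ is connected and $\rho$ is nontrivial --- a point already implicit in your reduction to the case $\rho\neq 0$.
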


To apply this, we consider branched covers over surfaces.  Suppose that $J$ bounds a Mobius band $F \subset B^4$.  Let $\overline{\rho} \in \mo$.  Linking with $\overline{\rho}$ defines a homomorphism $\rho \co H_1(M(J)) \to \Z_n$.  Since $\overline{\rho} \in \mo$, one can show that $\rho$ extends to $H_1(W(F))$.  If the order of $\rho$ in $\mo$ is $m$, then the nonsingularity of the linking form implies that the image of $\rho$ is cyclic of order $m$.   To simplify our discussion, we will henceforth assume that $m$ is a prime integer, which we will denote $p$.  
Theorem~\ref{thm:cg} has the following corollary.

\begin{corollary}\label{cor:mob} If $J$ bounds a ribbon Mobius band   $F \subset B^4$, then for all $\orho \in \mo$ of prime order $p$, 
$$ |\sigma(M(J),\rho) | \le 3 + \frac{1}{p-1}\beta_1(\widetilde{M}(J)).$$ 
\end{corollary}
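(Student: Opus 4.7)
The plan is to obtain Corollary~\ref{cor:mob} as a direct application of Theorem~\ref{thm:cg} to the pair $(M, W) = (M(J), W(F))$. It suffices to verify the hypotheses of Theorem~\ref{thm:cg} and to establish $\beta_2(W(F)) = 1$, so that the bound $2\beta_2(W(F)) + 1 = 3$ produces the stated inequality.

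First I would handle the set-up. The identification $\partial W(F) = M(J)$ is built into the definition of the branched cover. The paragraph immediately preceding the corollary already observes that when $\overline{\rho}\in \mo$ has prime order $p$, the homomorphism $\rho\co H_1(M(J)) \to \Z_p$ defined by linking with $\overline{\rho}$ extends to a homomorphism on $H_1(W(F))$; this follows from the definition of $\mo$ as the image of the torsion of $H_2(W(F),M(J)) \to H_1(M(J))$, together with nonsingularity of the linking form.

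Next I would use the ribbon hypothesis twice. A ribbon Mobius band admits a handle decomposition in $B^4$ with only $0$- and $1$-handles, and a standard construction then gives $W(F)$ a handle decomposition with handles of index at most $2$. Turning this decomposition upside down exhibits $W(F)$ as built from a collar of $M(J)$ by attaching only $2$-, $3$-, and $4$-handles. Such handle additions can only kill or preserve the fundamental group and never introduce new generators, so the induced map $\pi_1(M(J)) \to \pi_1(W(F))$ is surjective. The same handle structure yields $\beta_3(W(F)) = \beta_4(W(F)) = 0$.

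Finally I would compute $\beta_2(W(F))$. The Euler characteristic formula for double branched covers gives $\chi(W(F)) = 2\chi(B^4) - \chi(F) = 2$, since $\chi(F) = 0$ for a Mobius band. Combining this with $\beta_0(W(F)) = 1$, $\beta_3 = \beta_4 = 0$, and the rational vanishing $H_1(W(F);\Q) = 0$, one is forced to $\beta_2(W(F)) = 1$, and substitution into Theorem~\ref{thm:cg} completes the proof. The main obstacle in the above outline is verifying $H_1(W(F);\Q)=0$, which is not automatic for branched covers over nonorientable surfaces; I would establish it using the ribbon handle structure (which makes $H_1(W(F))$ purely torsion) together with the transfer argument for the $\Z_2$ branched cover, along the lines developed in~\cite{MR2855790}.
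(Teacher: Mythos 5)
Your proposal is correct and follows the same route as the paper: apply Theorem~\ref{thm:cg} to $(M(J), W(F))$, use the ribbon hypothesis to get surjectivity of $\pi_1(M(J)) \to \pi_1(W(F))$, and establish $\beta_2(W(F)) = 1$. The only difference is that the paper cites Massey for $\beta_2(W(F)) = 1$ while you derive it from the branched-cover Euler characteristic and the index-$\le 2$ handle structure; both justifications are sound.
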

\begin{proof}  Apply the theorem to $M = M(J)$ and $W = W(F)$.  Since $F$ is ribbon, the map $\pi_1(M) \to \pi_1(W)$ is surjective.  A result first proved by Massey~\cite{MR0250331} implies that $\beta_2(W(F)) = 1$.  
\end{proof}

In general, determining $\beta_1(\widetilde{M})$ might be difficult.  For lens spaces the value is easily computed.

\begin{theorem}\label{thm:betti1} Let $\rho \co H_1(L(s,q)) \oplus H_1(L(s,q)) \to \Z_p$ be a surjection.
\begin{itemize}
\item If $\rho$ is nontriivial on both summands, then  $\beta_1(\widetilde{M}(J)) = p-1$.

\item  If $\rho$ is trivial on one of the two summands, then  $\beta_1(\widetilde{M}(J)) =0$.
\end{itemize}

\end{theorem}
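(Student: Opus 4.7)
\emph{Proof plan.} Since $J = K \cs K$, the two-fold branched cover decomposes as $M(J) = L_1 \cs L_2$ with $L_i = L(s,q)$. I would set up by writing $X_i = L_i \setminus \operatorname{int}(B^3)$ so that $M(J) = X_1 \cup_{S^2} X_2$, and letting $\rho_i$ be the restriction of $\rho$ to the $i$-th summand. Because $p$ is prime, each $\rho_i$ is either trivial or surjective, so only two cases arise.

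First I would handle the case in which exactly one $\rho_i$ vanishes, say $\rho_2 = 0$. Then the $p$-fold cover of $X_2$ consists of $p$ disjoint copies of $X_2$, while the cover of $X_1$ is $\widetilde{L_1}$ with the $p$ preimages of the removed ball excised; since $\rho_1$ is surjective this piece is connected, and $\widetilde{L_1} \cong L(s/p, q')$ is again a lens space. Regluing each copy of $X_2$ to one boundary sphere of $\widetilde{X_1}$ identifies
$$\widetilde{M(J)} \;\cong\; L(s/p,q') \,\cs\, \underbrace{L(s,q) \cs \cdots \cs L(s,q)}_{p \text{ copies}},$$
a connect sum of lens spaces with finite first homology, so $\beta_1(\widetilde{M(J)}) = 0$.

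For the main case, assume both $\rho_i$ are surjective. Each $\widetilde{L_i}$ is then a lens space $L(s/p, q_i')$, and each $\widetilde{X_i}$ is connected with exactly $p$ boundary 2-spheres and $H_1(\widetilde{X_i}) \cong \Z_{s/p}$ (removing open balls does not change $H_1$). The cover $\widetilde{M(J)}$ is obtained by gluing $\widetilde{X_1}$ and $\widetilde{X_2}$ along their $p$ boundary spheres. Since $H_1(\sqcup_p S^2) = 0$, the Mayer--Vietoris sequence collapses to a short exact sequence
$$0 \to H_1(\widetilde{X_1}) \oplus H_1(\widetilde{X_2}) \to H_1(\widetilde{M(J)}) \to \ker\bigl( \Z^p \to \Z \oplus \Z \bigr) \to 0,$$
where each sphere generator in $H_0$ maps to $(1,-1)$. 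The kernel is $\Z^{p-1}$, so $H_1(\widetilde{M(J)}) \cong \Z_{s/p}^{\,2} \oplus \Z^{p-1}$, yielding $\beta_1(\widetilde{M(J)}) = p-1$.

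The main point that needs care is the topology of the cover near the separating 2-sphere: one has to verify that when $\rho_i$ is surjective, $\widetilde{X_i}$ is connected with exactly $p$ boundary spheres, and that the gluing pairs them up so that the Mayer--Vietoris map on $H_0$ has the asserted form. This is a standard computation once the deck-transformation $\Z_p$-action on the preimage of the separating ball is identified, but it is the step where I would be most careful.
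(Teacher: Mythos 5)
Your argument is correct and matches the method the paper only alludes to (the theorem is stated without proof; the author later remarks that it is ``an elementary exercise in covering spaces and the Euler characteristic, using the fact that any connected cover of a lens space is again a lens space''). Your decomposition along the separating sphere, identification of the lifted pieces $\widetilde{X_i}$ as punctured lens spaces, and the Mayer--Vietoris count yielding $\widetilde{M(J)} \cong L(s/p,q_1') \cs L(s/p,q_2') \cs (p-1)(S^1\times S^2)$ in the doubly nontrivial case is exactly the intended computation, with Mayer--Vietoris standing in for the Euler-characteristic bookkeeping.
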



\section{Two-bridge knots and Casson-Gordon invariants of lens spaces}\label{sec:cg-obs}

We begin by noting  that Corollary~\ref{thm:meta} implies the following.

\begin{corollary}  If $K$ is a two-bridge knot $B(s,q)$  and  $J = K \cs K$ bounds a ribbon Mobius band $ F \subset B^4$, then for every prime divisor of $p$ of $s$   there is an element $\orho \in \mo$ of order exactly $p$.  Consequently, there is a surjective homomorphism $\rho\co H_1(M(J)) \to \Z_p$ that extends to $H_1(W(F))$.  \end{corollary}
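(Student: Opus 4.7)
The plan is to combine Corollary~\ref{thm:meta} with the order relation $|\mo|^2 = |G_2|$ from Theorem~\ref{thm:linking}, applied at the prime $p$.

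Since $K = B(s,q)$, the double branched cover $M(K)$ is the lens space $L(s,q)$ with $H_1 \cong \Z_s$, and hence $H_1(M(J)) \cong \Z_s \oplus \Z_s$. Fix a prime $p$ dividing $s$ and write $s = p^a b$ with $\gcd(p,b) = 1$ and $a \ge 1$. The $p$-torsion subgroup of $H_1(M(J))$ is $\mathcal{H}_p \cong \Z_{p^a} \oplus \Z_{p^a}$.

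Next I would apply Corollary~\ref{thm:meta} to obtain one of two decompositions of $(\mathcal{H}_p, \text{lk})$: either $(\Z_{p^a}, \beta_1) \oplus (\Z_{p^a}, \beta_2)$ or $(0, \beta_1) \oplus (\Z_{p^a} \oplus \Z_{p^a}, \beta_2)$, with $\beta_2$ metabolic and its metabolizer equal to $\mo \cap \mathcal{H}_p$. Restricting $|\mo|^2 = |G_2|$ to $p$-primary parts yields $|\mo \cap \mathcal{H}_p| = p^{a/2}$ in case 1 (which forces $a$ to be even, hence $a \ge 2$) or $|\mo \cap \mathcal{H}_p| = p^a$ in case 2. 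In either case $\mo \cap \mathcal{H}_p$ is a nontrivial $p$-group, so it contains an element $\orho$ of order exactly $p$.

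Given such an $\orho$, I would define $\rho(x) = \text{lk}(x, \orho) \in \Q/\Z$. Since $p\orho = 0$, the image lies in $\tfrac{1}{p}\Z/\Z \cong \Z_p$; nonsingularity of the linking form together with $\orho \neq 0$ makes $\rho$ a nontrivial homomorphism to a group of prime order, and hence surjective. The extension of $\rho$ over $H_1(W(F))$ is the duality argument already indicated in the paragraph preceding Corollary~\ref{cor:mob}: a torsion lift of $\orho$ to $H_2(W(F), M(J))$ gives, by Poincar\'e--Lefschetz duality, an extension of the Kronecker-dual cocycle $\rho$ over $H_1(W(F))$. I do not anticipate a substantive obstacle; the statement is essentially a bookkeeping consequence of Corollary~\ref{thm:meta}, and the only point requiring a moment's care is that case 1 forces $a$ to be even so that $p^{a/2}$ is genuinely a positive power of $p$.
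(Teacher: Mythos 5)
Your proposal is correct and follows the same route the paper intends: the paper gives no separate proof, stating only that the result follows from Corollary~\ref{thm:meta}, and your argument is exactly that derivation (both cases of the splitting give a nontrivial $p$--part of $\mo$, hence an element of order exactly $p$, and linking with it gives the surjective character that extends by the mechanism described before Corollary~\ref{cor:mob}). Your side remark that case 1 forces $a$ to be even is consistent with Corollary~\ref{cor:primes} and is the only point needing care; there is no gap.
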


We will consider surjective  homomorphisms $\rho \co \pi_1(L(s,q)) \to \Z_p$, so will  write $s = pn$. (In our main reference,~\cite{MR900252}, the letter $m$ is used instead of $p$, and $m$ was not assumed to be prime.  Stronger results could be obtained here by not restricting to the prime setting, but that generality is not required to generate interesting families of examples.)

In~\cite{MR900252},    the values of $\sigma(L(s,q), \rho)$ are presented  within a computation preceding the unnumbered corollary~\cite[page 188]{MR900252}.  We restate that result below as Theorem~\ref{thm:comp}.  There is a subtlety to the formula that appears there, as   it depends on the choice of a particular generator of $H_1(L(s,q))$.  We will not specify that choice here, but note that since we will be considering the set of all values, knowing the choice is not necessary for computation.  That is, we are interested in the set of all values that arise for $\orho$ and its multiples, so can avoid that technicality.  Also, since $\sigma(L(s,q), \rho) = \sigma(L(s,q), -\rho) $ we can further restrict the set of values considered.

\begin{theorem}\label{thm:comp} Let $\orho$ be an element of order $p$ in $H_1(L(pn,q))$.  Then for $0<r< p$, 
$$\sigma(L(pn,q), r q  \rho) =   4\left(\text{area } \Delta(nr,\frac{qr}{p}) - \text{int } \Delta(nr,\frac{qr}{p}) \right)  . $$ 
\end{theorem}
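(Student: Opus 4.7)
The plan is to realize $\sigma(L(pn,q),\rho)$ as the signature defect of a four-manifold bounded by $L(pn,q)$ and then to rewrite the resulting trigonometric sum as the lattice-point expression on the right.

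First, choose a four-manifold $V$ with $\partial V = L(pn,q)$, $H_1(V;\Q)=0$, and over which the character $\rho$ extends; a natural choice is the linear plumbing of disk bundles over $S^2$ with framings given by the continued-fraction expansion of $pn/q$. (Alternatively, since the kernel of $\rho$ is the index-$p$ subgroup $\Z_n\subset\Z_{pn}$, the associated $p$-fold cover is $L(n,q)$, and one can work with the mapping cylinder of the covering map to $L(pn,q)$.)

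Next, form the $\Z_p$-cover $\widetilde V$ and apply the $G$-signature theorem.  The $\Z_p$-action on $\widetilde V$ has isolated fixed points, one at the center of each disk-bundle summand of the plumbing, and each contributes a product of cotangents of the form $\cot(\pi a/p)\cot(\pi b/p)$ with $a,b$ determined by the plumbing data.  Summing the $G$-signatures over the nontrivial characters and subtracting the ordinary signature $\sigma(V)$ expresses $\sigma(L(pn,q),rq\rho)$ as an explicit Dedekind-type sum; this is the computation appearing in~\cite{MR900252} preceding their unnumbered corollary.

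The final step is to recognize this cotangent sum as $4$ times the difference between the area of $\Delta(nr,qr/p)$ and the number of interior lattice points in it.  This amounts to a Rademacher-reciprocity manipulation of the Dedekind sum, essentially Pick's theorem applied to the triangle with vertices $(0,0)$, $(nr,0)$, $(nr,qr/p)$, with careful attention paid to which boundary lattice points lie on $\Delta$.

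The principal obstacle is bookkeeping: sign conventions and the choice of generator for $H_1(L(pn,q))$ affect which multiple $rq\rho$ corresponds to which triangle.  The excerpt flags this point and sidesteps it by working with the entire set $\{\sigma(L(pn,q),rq\rho) : 0<r<p\}$ together with the symmetry $\sigma(M,\rho)=\sigma(M,-\rho)$, so the precise identification of the generator is unnecessary for the applications that follow.
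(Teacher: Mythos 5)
First, a point of comparison: the paper does not prove Theorem~\ref{thm:comp} at all --- it is explicitly a restatement of the computation carried out in~\cite{MR900252} in the discussion preceding the unnumbered corollary on p.~188 --- so your sketch is really being measured against that source. Your overall strategy (realize $\sigma(L(pn,q),\rho)$ as an eigenspace-signature defect of a bounding $4$-manifold, express it via the $G$-signature theorem as a cotangent or Dedekind-type sum, and convert that sum into the lattice-point count on the right-hand side) is the standard route to such formulas and is in the spirit of the cited computation. Your closing observation, that the generator/sign ambiguity is harmless because the paper works with the full set of values together with the symmetry $\sigma(M,\rho)=\sigma(M,-\rho)$, correctly matches the paper's discussion.

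However, the foundational step of your argument fails as written. The linear plumbing $V$ determined by the continued-fraction expansion of $pn/q$ is built from a $0$-handle and $2$-handles, hence is simply connected; no nontrivial character $\rho\co H_1(L(pn,q))\to\Z_p$ extends over it, so there is no associated $\Z_p$-cover $\widetilde V$. The fallback you offer is also unavailable: the mapping cylinder of the $p$-fold covering $L(n,q)\to L(pn,q)$ is not a $4$-manifold. Moreover, even granting some honest $\Z_p$-cover $\widetilde V\to V$, its deck transformations act freely, so your claim that the action has an isolated fixed point at the center of each disk-bundle summand is inconsistent; isolated fixed points of that kind arise instead for the $\Z_{pn}$-action on $D^4\subset\C^2$ (with $L(pn,q)=S^3/\Z_{pn}$ and a single fixed point at the origin), which is the setting in which the $G$-signature theorem actually produces the cotangent terms. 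A smaller slip: $\sigma(L(pn,q),rq\rho)$ is the signature defect of a single $e^{2\pi i r/p}$-eigenspace, not a sum over all nontrivial characters. To repair the argument one should either work with the $\Z_{pn}$-action on $D^4$ and extract the $\Z_p$-character data from the resulting Atiyah--Singer invariants, or, as in~\cite{MR900252}, compute the eigenspace signatures of an explicit cyclic branched cover directly, where the triangle $\Delta(nr,qr/p)$ enters as a count of eigenvalues of each sign rather than through a Dedekind-reciprocity manipulation.
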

Here $\Delta(x,y)$ represents a triangle with vertices $\{(0,0), (x,0), (0,y)\}$.  The value of $\text{int } \Delta(x,y )$ is the number of integer lattice points in   the triangle, with lattice points on the interior of edges counting $1/2$,  non-zero vertices in the integer lattice count $1/4$, and the vertex at the origin is not counted.

Focusing on the case of $L(s,1)$, this gives the following.

\begin{corollary} Let $p$ be a prime factor of $s = pn$, and let $\rho$ be an element of $\mo$ of order $p$.  Then for all $r$, $0 < r <p$, 
\begin{equation*}
\begin{split}
\sigma(L(s,1), r \rho)&=4\left(\text{area } \Delta(nr,\frac{r}{p}) - \text{int } \Delta(nr,\frac{r}{p})\right)\\
&= \frac{2n}{p}r^2 - 2n r +1.
\end{split}
\end{equation*}
\end{corollary}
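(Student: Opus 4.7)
The plan is to apply Theorem~\ref{thm:comp} with $q = 1$ and to compute the two geometric terms directly; the key simplification is that when $q=1$ and $0 < r < p$, the triangle $\Delta(nr, r/p)$ is so flat that its only lattice points lie on the $x$-axis.

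First I would compute the area term. The triangle has vertices $(0,0)$, $(nr,0)$, $(0,r/p)$, so
\[
\text{area } \Delta(nr,\tfrac{r}{p}) \;=\; \tfrac{1}{2}\cdot nr \cdot \tfrac{r}{p} \;=\; \tfrac{nr^2}{2p},
\]
which contributes $\frac{2nr^2}{p}$ to the final formula after multiplication by $4$.

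Next I would count the lattice points. The hypotenuse has equation $x + npy = nr$, so a nonnegative integer point $(a,b)$ lies in the closed triangle iff $a + npb \le nr$. Since $0 < r < p$, any $b \ge 1$ forces $a \le n(r-pb) < 0$, which is impossible. Hence all lattice points of $\Delta(nr, r/p)$ lie on the bottom edge, namely $(0,0), (1,0), \ldots, (nr,0)$. Note also that $(0, r/p)$ is not a lattice point because $0 < r/p < 1$. Applying the weighting conventions (origin ignored; nonzero vertex $(nr,0)$ counts $\tfrac{1}{4}$; the $nr-1$ interior edge points each count $\tfrac{1}{2}$), we get
\[
\text{int } \Delta(nr,\tfrac{r}{p}) \;=\; (nr-1)\cdot\tfrac{1}{2} + \tfrac{1}{4} \;=\; \tfrac{nr}{2} - \tfrac{1}{4},
\]
so $4\cdot \text{int } \Delta(nr, r/p) = 2nr - 1$.

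Combining these two computations yields
\[
\sigma(L(s,1), r\rho) \;=\; 4\bigl(\text{area} - \text{int}\bigr) \;=\; \tfrac{2nr^2}{p} - 2nr + 1,
\]
which is the claimed formula. The only step that requires any attention is the lattice-point count, and there the main thing to verify is that $(0, r/p)$ is a genuine non-lattice vertex and that the flatness of the triangle (guaranteed by $r < p$) forces the count to be one-dimensional; everything else is arithmetic.
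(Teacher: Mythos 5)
Your proof is correct and follows essentially the same route as the paper: apply Theorem~\ref{thm:comp} with $q=1$, observe that $r/p<1$ confines all lattice points of $\Delta(nr,r/p)$ to a single edge, and finish by arithmetic. You simply carry out the lattice-point count explicitly (correctly identifying the edge as the one along the $x$-axis and weighting the vertex $(nr,0)$ by $\tfrac14$ and the $nr-1$ interior edge points by $\tfrac12$), where the paper leaves this as ``simple algebra.''
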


\begin{proof} The computation of the lattice point count is simplified by the fact that $r/p <1$, so that the only points in the count are on the left edge of the triangle.  The rest of the computation is simple algebra.
\end{proof}

\begin{corollary}\label{cor:maxmin} The maximum and minimum values of $-\sigma(L(pn,1), r \rho)$ for $0<r<p-1$ are:
\begin{itemize}
\item  Minimum   $= 2n(1- \frac{1}{p} )  -1> 0$;  occurs at $r = 1$ and $r = p-1$.
\item Maximum    $=  \frac{n}{2}(p -\frac{1}{p} ) -1$; occurs at $ r = \frac{p \pm 1}{2})$.
\end{itemize}
\end{corollary}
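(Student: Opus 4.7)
The plan is to observe that the previous corollary gives a completely explicit formula
\[
f(r) := -\sigma(L(pn,1), r\rho) = -\frac{2n}{p}r^2 + 2nr - 1,
\]
so the entire statement reduces to extremizing a single concrete quadratic in the integer variable $r$ on the range $1 \le r \le p-1$. Since the leading coefficient is negative, $f$ is a concave-down parabola; its unique real maximum is at $r = p/2$, and it enjoys the symmetry $f(r) = f(p-r)$ (visible either directly or from the fact $\sigma(L(s,q),\rho) = \sigma(L(s,q),-\rho)$ noted earlier).

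For the maximum, I would use that $p$ is an odd prime (the case $p=2$ gives no admissible $r$), so the integers closest to $p/2$ are $r = (p-1)/2$ and $r = (p+1)/2$. By the symmetry just mentioned they yield the same value, and a direct substitution gives
\[
f\bigl((p\pm 1)/2\bigr) = -\frac{n(p\pm 1)^2}{2p} + n(p\pm 1) - 1 = \frac{n}{2}\Bigl(p - \frac{1}{p}\Bigr) - 1,
\]
which is the claimed maximum. Concavity and the symmetry $r \leftrightarrow p - r$ together imply that on the integer interval $\{1,\dots,p-1\}$ the minimum of $f$ is attained at both endpoints $r=1$ and $r=p-1$, where
\[
f(1) = -\frac{2n}{p} + 2n - 1 = 2n\Bigl(1 - \frac{1}{p}\Bigr) - 1.
\]

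Finally, the positivity claim $2n(1-1/p) - 1 > 0$ is equivalent to $2n(p-1) > p$, which holds as soon as $p \ge 3$ and $n \ge 1$ — precisely the regime in which the range $0 < r < p$ is non-empty for $p$ an odd prime.

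Since the underlying identity is explicit, no serious obstacle arises; the only thing to be careful about is that $r$ must be an integer, so one cannot plug in the real maximizer $r = p/2$ directly. The symmetry $f(r) = f(p-r)$ conveniently forces the integer extrema to come in equal pairs $\{(p-1)/2,(p+1)/2\}$ and $\{1,p-1\}$, which is why each bullet in the statement lists two values of $r$.
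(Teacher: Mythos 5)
Your proof is correct and is exactly the argument the paper leaves implicit: the corollary is stated without proof as an immediate consequence of the explicit quadratic formula for $\sigma(L(pn,1),r\rho)$ from the preceding corollary, and your concavity-plus-symmetry analysis of that quadratic (with the correct values at $r=(p\pm1)/2$ and $r=1,\,p-1$) is the intended verification.
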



\section{Torus knots $T(2,m)$.}
We can now use the results of Theorem~\ref{thm:cg} and Corollary~\ref{cor:maxmin} to restate the constraint from Corollary~\ref{cor:mob}.  This is sufficient to rule out chiral smoothings for torus knot $T(2,m) = B(m,1)$ for all odd $m$ other than $m = 5$ and $m=9$.  As illustrated in the introduction, $T(2,5)$ does support a chiral smoothing. The case of $T(2,9)$ is unknown. 

Let $K = T(2,m)$ and suppose that $K$ admits a chiral smoothing; in particular, assume that  $K \cs K$ bounds a ribbon Mobius band   $F \subset B^4$.  Note that  $M(K \cs K) = L(m,1) \cs L(m,1)$.   Let $m = pn$ for some odd prime $p$, and let $\orho$ be an element of order $p$ in $\mo$.   There are two cases to consider. 
\smallskip

\noindent{\bf Case I.} If $\rho$ is nontrivial on both natural summands of $H_1(L(m,1) \cs L(m,1))$ then
$$\left( 2n(1- \frac{1}{p} )  -1\right) + \left(  \frac{n}{2}(p -\frac{1}{p} ) -1\right) \le 4.$$
To see this, we observe that choosing the correct multiple of $\rho$, we can assure that one of the two values of the Casson-Gordon invariant is at the maximum.

\noindent{\bf Case II.} If $\rho$ is trivial on one of the  natural summands of $H_1(L(m,1) \cs L(m,1))$ then
$$\left(  \frac{n}{2}(p -\frac{1}{p} ) -1\right) \le 3.$$
Again, this follows by  choosing the   multiple of $\rho$ that which the  Casson-Gordon invariant is at its maximum.

With these two bounds, the proof of the following theorem is immediate.

\begin{theorem} If the knot $K = T(2,m), m>1$,  admits a chiral smoothing, then $m= 5$ or $m=9$.
\end{theorem}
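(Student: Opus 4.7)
The plan is to show that for each odd $m > 1$ with $m \notin \{5,9\}$, assuming $K = T(2,m)$ admits a chiral smoothing leads to a contradiction by combining the linking form obstruction of Corollary~\ref{cor:primes} with the Casson--Gordon bounds for Cases I and II stated just above the theorem. Since $M(K) = L(m,1)$ has $H_1 \cong \Z_m$, for each prime $p \mid m$ the corollary opening Section~\ref{sec:cg-obs} produces an element $\orho \in \mo$ of order exactly $p$, and the associated $\rho\co H_1(M(K\cs K)) \to \Z_p$ extends to $H_1(W(F))$. Theorem~\ref{thm:betti1} then determines which case we are in according to whether $\rho$ vanishes on one of the two $L(m,1)$ summands.

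Writing $m = pn$, I would translate the two displayed Case I and Case II inequalities into the purely arithmetic form
\[
n(p-1)(p+5) \le 12p \qquad \text{(Case I)} \qquad\text{and}\qquad n(p^2 - 1) \le 8p \qquad \text{(Case II)},
\]
where each arises by taking the maximum of $-\sigma$ on one summand (from Corollary~\ref{cor:maxmin}) and, in Case I, adding the minimum contribution on the other summand. Because every $\orho \in \mo$ of prime order falls into one of the two cases, at least one of these two inequalities must hold for some prime $p \mid m$.

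The next step is a short numerical enumeration. For any $p \ge 11$ and $n \ge 1$ both bounds strictly fail, so no prime of $m$ may exceed $7$. For $p \in \{3,5,7\}$ the admissible pairs $(p,n)$ are read off immediately: $p=3$ forces $n \le 3$, $p=5$ forces $n = 1$, and $p=7$ forces $n = 1$. Combining with the parity constraint that $m$ be odd, the candidates reduce to $m \in \{3,5,7,9\}$. Finally, Corollary~\ref{cor:primes} rules out $m=3$ and $m=7$ because in each case a prime $\equiv 3 \pmod 4$ divides $s = m$ to odd exponent, leaving only $m = 5$ and $m = 9$.

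The conceptually easy part is the algebra; the main subtlety is bookkeeping the case split. For $m = 9$ (so $p = 3$, $n = 3$) one computes $n(p-1)(p+5) = 48 > 36$, ruling out Case I, yet $n(p^2-1) = 24 = 8p$, so Case II is not contradicted. This is precisely why $m = 9$ survives the argument and matches the open case noted in the introduction: no strengthening of the conclusion is available without a sharper input than the crude $|\min + \max|$ bound used to derive Case II.
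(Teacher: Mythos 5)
Your proof is correct and follows the paper's approach exactly: it combines the Case I/II Casson--Gordon inequalities with the linking-form constraint of Corollary~\ref{cor:primes}, and in fact you spell out the bounds on $n$ for $p\in\{3,5,7\}$ that the paper's two-line proof leaves implicit (the paper excludes $p\ge 11$ and then simply cites Corollary~\ref{cor:2-3}, which by itself still leaves $25$, $45$, $49$, $81$ on the table). One small point in your favor: your finding that Case I actually fails for $m=9$ (namely $48>36$, equivalently $3+3=6>4$ from Corollary~\ref{cor:maxmin}) is correct, whereas the paper's closing remark asserts the Case I inequality is an equality there; only the Case II bound is tight for $T(2,9)$.
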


\begin{proof}
If $p \ge 11$, then $\frac{1}{2}(p - \frac{1}{p}) -1 >4$, so the inequality is violated regardless of $n$.  By Corollary~\ref{cor:2-3}, the only remaining possibilities are $m=5$ and $m=9$.

\end{proof}

It is interesting to observe that in the one unknown case, $T(2,9)$, we would consider $p=3$ and $n=3$.  There  are then   two inequalities to consider, both of which can be seen to actually be equalities:
\begin{itemize}

\item $\left( 3(1- \frac{1}{3} )  -1\right) + \left(  \frac{3}{2}(3 -\frac{1}{3} ) -1\right) = 4.$

\item $\left(  \frac{3}{2}(3 -\frac{1}{3} ) -1\right)=  3.$

\end{itemize}

\section{Further metabolizer constraints}

\subsection{Identifying metabolizing vectors} In the following discussion, we use the identification of $H_1(M)$ with $\Q/\Z$--valued  characters on $H_1(M)$ that arises from the linking form. 
To generalize the examples of the previous section, we observe that   Corollary~\ref{cor:primes}  leads to the following result. \begin{theorem}
Suppose that $K$ supports a  chiral smoothing, that   $H_1(M(K)) \cong \Z_s$, and that $p$ is a prime divisor of $s$ with odd exponent $a$.  Let $\rho$ be  a nontrivial $\Z_p$--valued character on $H_1(M(K))$.  Then for some $\alpha \in \Z_p  $ with $1+\alpha^2 = 0 \mod p$ and for all $r$:
$$\big| \sigma(K, r\rho  ) + \sigma(K, r\alpha \rho)\big| \le 4.$$

\end{theorem}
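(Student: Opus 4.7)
The plan is to construct, from the metabolizer forced by the ribbon Mobius band $F\subset B^4$ bounding $J=K\cs K$, a specific character on $M(J)=M(K)\cs M(K)$ of the form $(r\rho)\oplus(r\alpha\rho)$, and then to feed this character into Corollary~\ref{cor:mob}, exploiting the additivity of Casson-Gordon under connected sums.

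By Corollary~\ref{cor:primes}, since $p^a\mid s$ with $a$ odd, $p\equiv 1\pmod 4$ and the $p$-primary part of the linking form on $H_1(M(J))$ has the shape $(0,\beta_1)\oplus(\Z_{p^a}\oplus\Z_{p^a},\beta_2)$ with $\beta_2$ metabolic. Letting $L$ denote the $p$-part of $\mo$, the subgroup $L$ is isotropic of order $p^a$ in $(\Z_{p^a}\oplus\Z_{p^a},\,\lambda_K\oplus\lambda_K)$, where $\lambda_K$ is the nonsingular linking form on the $p$-part of $H_1(M(K))$. The key structural step is to show that, for $a$ odd, $L$ must be cyclic: the alternative noncyclic subgroups of order $p^a$ in $\Z_{p^a}\oplus\Z_{p^a}$ satisfy $p$-adic constraints on their generators that, combined with isotropy under $\lambda_K\oplus\lambda_K$, force $a$ to be even. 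A cyclic $L$ of order $p^a$ is the graph $\{(x,\gamma x):x\in\Z_{p^a}\}$ of multiplication by some $\gamma\in\Z_{p^a}$, and the isotropy condition together with the nonsingularity of $\lambda_K$ forces $1+\gamma^2\equiv 0\pmod{p^a}$. Setting $\alpha:=\gamma\bmod p$ produces the required $\alpha\in\Z_p$ with $1+\alpha^2\equiv 0\pmod p$.

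Given any nontrivial character $\rho\co H_1(M(K))\to\Z_p$, identify $\rho$ via the linking form with an element $x_0$ of order $p$ in the $p$-part of $H_1(M(K))$. For each integer $r$, the element $(rx_0,\,r\gamma x_0)$ lies in $L\subseteq\mo$; since $x_0$ has order $p$, the character this element defines on $H_1(M(J))$ agrees with $(r\rho)\oplus(r\alpha\rho)$. Because $L\subseteq\mo$, that character extends over $H_1(W(F))$, so Corollary~\ref{cor:mob} applies, and additivity of $\sigma$ under connected sums identifies $\sigma(M(J),(r\rho)\oplus(r\alpha\rho))$ with $\sigma(K,r\rho)+\sigma(K,r\alpha\rho)$. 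A Mayer-Vietoris computation for the $p$-fold cover of $M(K)\cs M(K)$, in which the $p$ preimage spheres of the connecting $S^2$ contribute $p-1$ to $\beta_1$, pins down $\beta_1(\widetilde{M(J)})$; substituting into Corollary~\ref{cor:mob} yields $|\sigma(K,r\rho)+\sigma(K,r\alpha\rho)|\le 4$.

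The main obstacle I foresee is the cyclicity of $L$ when $a$ is odd; once that structural fact is in hand, the remaining ingredients — the duality between characters and elements of $H_1$ via the linking form, the verification that $(r\rho)\oplus(r\alpha\rho)$ extends to $W(F)$, additivity of $\sigma$ under connected sums, and the Betti-number count for the cover of the connected sum — fall into place by direct computation.
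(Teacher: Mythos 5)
Your overall architecture (produce an order-$p$ metabolizing element of the form ``$(x,\alpha x)$'' with $1+\alpha^2\equiv 0 \bmod p$, identify it with the character $(r\rho)\oplus(r\alpha\rho)$ via the linking form, check it extends over $W(F)$, and then combine Corollary~\ref{cor:mob}, additivity of $\sigma$, and the Betti-number count $\beta_1(\widetilde{M})=p-1$ to get the bound $3+1=4$) matches the paper. But the step you yourself flag as the crux --- that for $a$ odd the $p$-part $L$ of $\mo$ must be \emph{cyclic} --- is false, and your proposed justification (``$p$-adic constraints on generators of noncyclic isotropic subgroups force $a$ even'') does not survive inspection. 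Concretely, take $p=5$, $a=3$, and the form $\lambda((x_1,y_1),(x_2,y_2))=\gamma(x_1x_2+y_1y_2)/125$ on $\Z_{125}\oplus\Z_{125}$ with $\gamma$ a unit. The subgroup $L=\langle (5,10),\,(25,0)\rangle$ has order $125$, is isomorphic to $\Z_{25}\oplus\Z_{5}$, and is totally isotropic: the self-linking of $k(5,10)+l(25,0)=(5k+25l,\,10k)$ is $\gamma(k^2+2kl+5l^2)\in\Z$, and the cross-pairings are likewise integral. So a noncyclic metabolizer can occur with $a$ odd, and your route to the existence of $\alpha$ collapses.

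The paper needs much less than cyclicity. Writing $a=2k+1$ and $\mo\cong\Z_{p^b}\oplus\Z_{p^c}$ with $b+c=a$, the parity forces $\max(b,c)\ge k+1$, so $\mo$ contains an element of order exactly $p^{k+1}$; after multiplying by a unit such an element can be written $g=(p^k,\alpha p^j)$ with $j\ge k$ and $\gcd(\alpha,p)=1$, and the vanishing of its self-linking $\gamma(p^{2k}+\alpha^2p^{2j})/p^{2k+1}$ forces $j=k$ and $p\mid 1+\alpha^2$. Then $p^k g=(p^{a-1},\alpha p^{a-1})$ is the order-$p$ metabolizing element whose multiples give all the characters $(r\rho, r\alpha\rho)$. (In my counterexample above, $g=(5,10)$ plays exactly this role with $\alpha=2$, $1+2^2\equiv 0\bmod 5$, even though $L$ is not cyclic.) Your remaining steps --- duality between characters and homology classes, extension over $W(F)$, additivity of $\sigma$, and the cover computation giving $\beta_1=p-1$ when the character is nontrivial on both summands --- are fine and agree with the paper; the proof is repaired by replacing your cyclicity claim with this parity-of-$a$ argument.
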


\begin{proof} Let $a = 2k +1$.  According to Corollary~\ref{cor:primes}, if  $H_1(M(K))_p \cong \Z_{p^a}$, then $H_1(M(K\cs K))_p  \cong  \Z_{p^a} \oplus \Z_{p^a}$ has a metabolizer  $\mo$ of order $p^a$, and thus $\mo \cong \Z_{p^b} \oplus \Z_{p^c}$, where $b + c = a$.  We can assume that $b \ge k +1$.   In particular, $\mo$ contains an element of order at least $p^{k+1}$.  Some multiple of this element is of order exactly $p^{k+1}$.  By taking a further multiple, we see that $\mo$ contains an element $g = (p^k, \alpha p^j) \in Z_{p^a} \oplus \Z_{p^a}$, for some $j \ge k$, where $\gcd(\alpha,p) =1$.    This element has self-linking 0 if and only if $j = k$ and $1 + \alpha^2 $ is divisible by $p$.   

Suppose that  the self-linking of $(1, 0) \in  H_1(M(K\cs K))_p $ is $\gamma/p^a$, where $\gcd(\gamma,p) = 1$.  If  $g$ is  multiplied by $p^k$, we get the element $   (p^{a-1}, \alpha p^{a-1}) \in \mo$,  a metabolizing element of order $p$ that takes  value $\gamma$ on the generator of the first summand  and $\alpha \gamma$ on the second.  
The pairs of metabolizing characters, $(r\rho,   \alpha r \rho)$ all arise as multiples of this element.
\end{proof}

\begin{example}

Consider the two-bridge knot $K = B(17,2)$, also known as $10_1$, the four-twisted double of the unknot.  We have $H_1(M(K)) \cong \Z_{17}$, and a calculation using Theorem~\ref{thm:comp} yields the following values of Casson-Gordon invariants, listed as $(r, \sigma(M(K), 2r\rho))$
$$\{ (1, -\frac{13}{17}), (2, -\frac{35}{17}), (3, -\frac{49}{17}), (4, -\frac{55}{17}), (5, -\frac{53}{17}), (6,  -\frac{43}{17}),  (7, -\frac{25}{17}), (8,  \frac{1}{17})    \}.$$

For any nonsingular linking form on $\Z_{17}$, the metabolizer for the direct sum, $\Z_{17}\oplus \Z_{17}$ consists of the multiples of $(1,\pm 4)$.  Thus, the metabolizer contains $(3,\pm 5)$ and the absolute value of the sum of the two Casson-Gordon invariants is $102/17 > 4$.  An obstruction to chirally smoothing this knot could not be derived by considering the maximum and minimum values of the Casson-Gordon invariants.

\end{example}

\subsection{Non-prime order metabolizing elements}

We have concentrated on the case of characters taking values in $\Z_p$ for some prime $p$.   In the setting of ribbon surfaces, Theorem~\ref{thm:cg} applies without the constraint the $p$ be prime (see~\cite{MR2855790} for details).  However, Theorem~\ref{thm:betti1} does require that $p$ be prime.  Computing $\beta_1(\widetilde{M})$ for connected sums of lens spaces is an elementary exercise in covering spaces and the Euler characteristic, using the fact that any connected cover of a lens space is again a lens space. In all cases, the quotient $\frac{1}{p-1}\beta_1( \widetilde{M}) \le 1$.

\begin{example}  To find a knot in which the basic constraints do not apply, we need to consider a two-bridge knot $B(s,q)$, where $s$ is composite and all prime factors that equal $3$ modulo $ 4$ have even exponent.  Furthermore, we have already handled the case of $q = 1$, so must consider a larger value of $q$.  A basic example is $K= B(3^2 \cdot13, 20)$. In this case,   if we consider only characters to $\Z_3$, the absolute value of the sum of the maximum and minimum Casson-Gordon invariant is $2$.  For characters to $\Z_{13}$ the sum is $4$.  However, for surjective characters to $\Z_{39}$ the sum is $\frac{53}{13} >4$.  Thus, this knot is obstructed from supporting a chiral smoothing.
\end{example}

\subsection{Doubled knots}
We have restricted to two-bridge knots, largely because this led to settings in which Casson-Gordon invariants are readily computed.  In the case of doubled knots, the invariants can often be computed using results of Gilmer~\cite{MR711523}.   

It was noticed by Kanenobu~\cite{MR711523} that a so-called $4$--move can be performed on a knot via a single band move.  In particular, a single smoothing in a diagram for the positive clasped, $k$--twisted double of a knot $K$,  $D_+(K,t)$ converts it to the negative clasped double, $D_-(K,t)$.   In general,   $-D_+(K,t) = D_-(-K,-t)$;  thus,  if $K$ is amphicheiral, a single smoothing converts $D_+(K,0)$ into $-D_+(K,0)$, so these doubled knots support chiral smoothings.  What if $t \ne 0$?

\begin{example} The figure eight knot, $4_1$, can be described as $D_+(U, -1)$ where $U$ is the unknot.  We have $M(4_1) = L(5,2)$.  More generally, for any  $K$, $H_1(M(D_+(K,-1))) \cong \Z_5$.  According to Gilmer's work, for any surjective character $\rho\co  H_1(M(D_+(K,-1))) \to \Z_5$,   the value of $\sigma( M(D_+(K,-1)), \rho)$ is determined by the values of $\sigma( L(5,2), \rho')$ (for some nontrivial $\rho'$) and the classical Levine-Tristram signatures~\cite{MR0253348,MR0248854} of $K$.  We state the formula without specifying the relevant nontrivial character $\rho'$ or the value of $i$, noting only that $i \ne 0 \mod 5$.
$$   \sigma(M(D_+(K,-1)),\rho) = \sigma(L(5,2),\rho') + 2\sigma_{i/5}(K).$$  The values of $\sigma(L(5,2), \rho)$ for nontrivial $\rho$ are $\pm 1/5$.  Calculations such as those done earlier in this paper yield the following, which applies, for instance, for $K$ any nontrivial torus knot.

\begin{theorem}If $|\sigma_{1/5}(K) + \sigma_{2/5}(K) |\ge 2$, then $D_+(K,-1)$ does not support a chiral smoothing.  \end{theorem}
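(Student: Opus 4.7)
The plan is to adapt the two-bridge framework of Sections 4 and 5 to doubled knots via Gilmer's formula. Suppose for contradiction that $D_+(K,-1)$ admits a chiral smoothing, so $J := D_+(K,-1)\cs D_+(K,-1)$ bounds a ribbon Mobius band in $B^4$. Then $H_1(M(J))\cong\Z_5\oplus\Z_5$, and since $5\equiv 1\bmod 4$ appears with odd exponent in $|H_1(M(D_+(K,-1)))|=5$, Corollary~\ref{cor:primes} forces the linking form to be metabolic. By the theorem of Section 5 identifying metabolizing vectors, the metabolizer $\mo$ is cyclic of order $5$, generated by a character of the form $\orho = (\rho,\alpha\rho)$ with $\alpha^2+1\equiv 0\bmod 5$, so $\alpha\in\{2,3\}$.

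Additivity of $\sigma$ under connected sum, combined with Gilmer's formula displayed just above the theorem, yields
\[
\sigma(M(J),r\orho) \;=\; \sigma(L(5,2),r\rho')+\sigma(L(5,2),\alpha r\rho') + 2\sigma_{r/5}(K)+2\sigma_{\alpha r/5}(K)
\]
for $r=1,2,3,4$. Two simplifications collapse the right side. First, using $\sigma_\omega(K)=\sigma_{\bar\omega}(K)$, one checks case by case that $\{r,\alpha r\}\bmod 5$ reduces to $\{1,2\}$ up to conjugation for every $r$ and either choice of $\alpha$, so the Levine-Tristram contribution is $2(\sigma_{1/5}(K)+\sigma_{2/5}(K))$, independent of $r$. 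Second, a short computation of the four values $\sigma(L(5,2),k\rho')$ via Theorem~\ref{thm:comp} gives the pattern $\pm 1/5$ under which $\sigma(L(5,2),r\rho')+\sigma(L(5,2),\alpha r\rho')=0$ for every $r\in\{1,2,3,4\}$. Hence $\sigma(M(J),r\orho)=2(\sigma_{1/5}(K)+\sigma_{2/5}(K))$.

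Applying Corollary~\ref{cor:mob} gives $|\sigma(M(J),r\orho)|\le 3+\tfrac{1}{4}\beta_1(\widetilde{M(J)})$. Combined with the Betti bound $\beta_1(\widetilde{M(J)})\le p-1=4$, this yields $|\sigma_{1/5}(K)+\sigma_{2/5}(K)|\le 2$, contradicting the hypothesis.

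The main obstacle is the Betti-number estimate $\beta_1(\widetilde{M(J)})\le 4$. When $M(D_+(K,-1))$ is itself a lens space (the case of $K$ unknotted) the bound follows from the same Mayer-Vietoris computation used for connected sums of lens spaces; for general $K$ one must invoke a surgery description of $M(D_+(K,-1))$ implicit in Gilmer~\cite{MR711523} and compute $\beta_1(\widetilde{M\cs M})$ from $\beta_1(\widetilde{M})$ via Mayer-Vietoris on the $5$-fold cyclic cover, verifying that the cyclic cover of $M(D_+(K,-1))$ has vanishing first Betti number, mirroring the lens space case. A secondary subtlety is the equality case $|\sigma_{1/5}(K)+\sigma_{2/5}(K)|=2$, where the strict contradiction must be extracted from the non-strict Casson-Gordon bound, presumably by appealing to integrality of the Levine-Tristram signatures together with the rational nature of the lens-space contribution.
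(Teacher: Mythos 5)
Your approach is the one the paper intends: the paper offers no written proof of this theorem beyond the remark that ``calculations such as those done earlier in this paper yield the following,'' and your reconstruction --- a metabolizer generated by $(1,\alpha)$ with $\alpha^2\equiv -1 \pmod 5$, additivity of $\sigma$ plus Gilmer's displayed formula, the check that every pair $\{r,\alpha r\}$ reduces to $\{1,2\}$ up to conjugation, and the bound $3+\tfrac14\beta_1(\widetilde{M})=4$ from Corollary~\ref{cor:mob} --- is exactly the calculation being alluded to. Of the two difficulties you flag, the first (that Theorem~\ref{thm:betti1} covers only lens spaces, so $\beta_1$ of the $5$--fold cover of $M(D_+(K,-1))$ for general $K$ needs a separate argument) is genuine but is part of the setup in Gilmer's paper from which the displayed formula is taken; your Mayer--Vietoris reduction to the single-summand cover is the right move.

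The second difficulty is more serious than your closing sentence suggests, and your proposed fix does not work. The four values $\sigma(L(5,2),k\rho')$ are $+1/5$ on the quadratic residues and $-1/5$ on the non-residues mod $5$; since $\alpha\in\{2,3\}$ is a non-residue, multiplication by $\alpha$ swaps the two classes and the lens-space contributions cancel exactly for every $r$, leaving $\sigma(M(J),r\orho)=2\bigl(\sigma_{1/5}(K)+\sigma_{2/5}(K)\bigr)$ on the nose. When the hypothesis holds with equality, this gives $|{\sigma(M(J),r\orho)}|=4\le 4$ and there is no contradiction; integrality of the Levine--Tristram signatures cannot help, since $\sigma_{1/5}+\sigma_{2/5}=2$ is perfectly consistent with the non-strict bound. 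This is the same phenomenon the paper itself points out for $T(2,9)$, where the analogous inequalities become equalities and the case is left open. So your argument (and, as far as one can tell, the paper's) actually proves the conclusion only under the strict hypothesis $|\sigma_{1/5}(K)+\sigma_{2/5}(K)|>2$; for the stated applications (nontrivial torus knots) the sum is at least $4$ in absolute value, so the intended examples survive.
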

\end{example}

\subsection{Smoothing distance} Rather than ask if a single smoothing can covert $K$ into $-K$, one can more generally ask  whether a single smoothing can convert a knot $K$ into a knot $J$.  

\begin{example} The first knot that was shown to be algebraically slice but not slice by Casson and Gordon~\cite{MR900252} was the two-bridge knot $B(25,2)$.  Since the appearance of a prime of even power in the first homology introduces challenges, we consider a related family of examples,  the set of four  two-bridge knots $\{B(25,1) , B(25,24), B(25,2), B(25,23)\}$.  The first two are mirror images, as are the last two.   
For each, up to conjugation there are two characters to $\Z_5$ and the pair of  values of the Casson-Gordon invariants is, for each knot, $\{(-7,-11), (7,11), (-3,-5), (3,5)\}$.

In general, if there is a smoothing that converts  a knot $K$ into a knot $J$, then $K \cs -J$ bounds a Mobius band in the four-ball.  The bounds based
on Theorem~\ref{thm:cg} continue to apply.  Using the additivity of the Casson-Gordon invariants, it is then clear that no two of these four knots differ by a single smoothing.

This example cannot be expanded to include all knots $B(25,q)$.  For instance,  consider the knot $B(25,8)$.  This knot has a diagram corresponding to each continued fraction expansion of $25/8$.   Consider the particular expansion  $$
\frac{25}{8}=4 + \cfrac{1}{-2+\cfrac{1}{2 +\cfrac{1}{-1 + \cfrac{1}{-5}}}} 
$$
Smoothing a single crossing yields a two-bridge knot with diagram corresponding to the continued fraction
$$
\frac{25}{7}=4 + \cfrac{1}{-2+\cfrac{1}{2 +\cfrac{1}{0 + \cfrac{1}{-5}}}}
$$  
Thus, the two-bridge knots $B(25,7)$ and $B(25,8)$ differ by a single smoothing.  These are the knots $8_9$ and $-11a{364}$ in the standard notation~\cite{KnotInfo}.

\end{example}

\section{Heegaard Floer obstructions and further examples of torus knots}\label{sec:hfcg}
In~\cite{MR3694597}, Ozsv\'ath-Stipcisz-Szab\'o developed  a bound on the nonorientable four-ball genus of a knot $K$ in terms of the {\it little upsilon function}, $\upsilon(K)$, and the classical signature~\cite{MR0171275},  $\sigma(K) $.   In brief, if $K$ bounds a connected surface $F\subset B^4$, then $\beta_1(F) \ge | \upsilon(K) - \sigma(K)/2|$.  Both $\upsilon$ and $\sigma$ are additive functions, so for knots of the form $K\cs K$ the value of this difference is even, and in particular cannot equal 1.  Thus, we have the following theorem.

\begin{theorem} If $\upsilon(K) \ne \sigma(K)/2$, then $K$ does not support a chiral smoothing.
\end{theorem}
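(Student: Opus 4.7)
The plan is to argue by contrapositive along the lines sketched just before the statement. Suppose $K$ supports a chiral smoothing. As explained in Section~\ref{sec:intro}, a single band move then converts $K\cs K$ into the ribbon knot $K\cs -K$; capping off a ribbon disk for the latter with the band cobordism produces a connected surface $F\subset B^4$ with $\partial F = K\cs K$ and Euler characteristic $0$. Because the band is orientation-reversing (so that $-K$ replaces $K$ in the target summand), $F$ is a Mobius band, and in particular $\beta_1(F)=1$.

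Applying the Ozsv\'ath--Stipsicz--Szab\'o inequality to the pair $(K\cs K, F)$ gives
$$1 \;=\; \beta_1(F) \;\ge\; \bigl|\upsilon(K\cs K) - \tfrac{1}{2}\sigma(K\cs K)\bigr|.$$
Since both $\upsilon$ and $\sigma$ are additive under connected sum, the right-hand side equals $|2\upsilon(K) - \sigma(K)|$.

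To finish, I would invoke two standard integrality facts: $\upsilon(K)\in\Z$, which is part of the basic package of the invariant developed in~\cite{MR3694597}, and $\sigma(K)\in 2\Z$, a classical property of the knot signature. Hence $2\upsilon(K)-\sigma(K)$ is an even integer whose absolute value is at most $1$, and so it must equal $0$. Thus $\upsilon(K)=\sigma(K)/2$, proving the contrapositive. I do not anticipate any real obstacle: the only slightly delicate points are the verification that $F$ is genuinely connected with $\beta_1=1$ (handled in the first paragraph) and the parity observation (handled by the integrality facts), after which the bound collapses to the claim immediately.
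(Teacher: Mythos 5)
Your proof is correct and follows essentially the same route as the paper: apply the Ozsv\'ath--Stipsicz--Szab\'o bound to the Mobius band bounded by $K\cs K$, use additivity of $\upsilon$ and $\sigma$, and conclude by parity. You simply make explicit two points the paper leaves implicit, namely the construction of the Mobius band from the smoothing and the integrality facts $\upsilon(K)\in\Z$, $\sigma(K)\in 2\Z$ underlying the evenness claim.
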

 Denote the difference $\upsilon(K) -  \sigma(K)/2$ by $\phi(K)$. 

For alternating knots $K$, $\phi(K) = 0$, so no obstruction arises.  For  general torus knots, recursive formulas are available:  for the signature function, see~\cite{MR617628}, and for the upsilon function, see~\cite{MR3694648}.  For a more general discussion, see~\cite{2018arXiv180901779J}.   For instance, one can show that   $\phi(T(3,4+6k)) = 1$ and  $T(3,5+6k)=1$ for all $k\ge 0$, and thus these admit no chiral smoothings.  On the other hand,   $\phi(T(3,7)) = \phi(T(3,8)) =0$ and we do not know if these knots admit chiral smoothings.   For $T(3,8)$, the two-fold cover has non-trivial first homology, isomorphic to $\Z_3$, so conceivably Casson-Gordon theory could be applied, but those computations have not been undertaken.

As another example, for positive torus knots of the form $T(4,4k+7)$ we have $\phi(T(4, 4k +7)) = 1$, so there is an obstruction to these knots admitting chiral smoothings.  On the other hand, for all positive $k$,  $\phi(T(4,4k+5)) =0$, and there is no obstruction of this form.  However, in these cases, Casson-Gordon theory can be applied.  For the first possible example,  $H_1(M_2(T(4,5) \cs T(4,5))) \cong  (\Z_5)^2$, but the obstructions similar to those developed in Section~\ref{sec:cg-obs} vanish.  Work of Gilmer  and Orevkov~\cite{2015arXiv151006377G}  facilitate the computation of the Casson-Gordon  invariants of the two-fold branched covers of torus knots.  Using results there, along with programs written by Gilmer, torus knots of the form $T(4,4k +5)$ have been shown to not support chiral smoothings  for $7 \le k \le 50$.  A proof that the obstruction holds for all $k>50$ has not be achieved.
More generally, all computational evidence points to it being true that for all $m$ and for all large $n$ with $\gcd(2m ,n) = 1$, the torus knot $T(2m, n)$ does  not support a chiral smoothing.



\end{document}